\newcommand{\N}{\ensuremath{\mathbb{N}}}
\newcommand{\Q}{\ensuremath{\mathbb{Q}}}
\newcommand{\SSS}{\ensuremath{\mathbb{S}}}
\newcommand{\CPn}{\ensuremath{\mathbb{C}\text{\textbf{P}}^n}}
\newcommand{\HPn}{\ensuremath{\mathbb{H}\text{\textbf{P}}^n}}
\newcommand{\mbS}{\mathbb{S}}
\newcommand{\MZ}{\mathcal{Z}}
\newtheorem{theorem}{Theorem}
\theoremstyle{definition}
\newtheorem{remark}[theorem]{Remark}
\newtheorem{definition}[theorem]{Definition}
\begin{document}
        
\title[Periods of Morse--Smale diffeomorphisms ]
{Periods of Morse--Smale diffeomorphisms\\ on $\SSS^n$, $\SSS^m \times \SSS^n$, $\CPn$ and $\HPn$.}

\author[C. Cuf\'i-Cabr\'e and J. Llibre]
{Clara Cuf\'i-Cabr\'e and Jaume Llibre}

\address{Departament de Matem\`{a}tiques, Universitat Aut\`{o}noma de Barcelona, 08193 Bellaterra, Barcelona, Catalonia, Spain}
\email{clara@mat.uab.cat, jllibre@mat.uab.cat}

\subjclass[2010]{37C05, 37C25, 58B05}

\keywords{Morse--Smale diffeomorphisms, periodic orbit, Lefschetz zeta function, minimal Lefschetz period}
\date{}
\dedicatory{}

\maketitle

\begin{abstract}
We study the set of periods of the Morse--Smale diffeomorphisms on the $n$-dimensional sphere $\SSS^n$, on products of two spheres of arbitrary dimension $\SSS^m \times \SSS^n$ with $m \neq n$,
on the $n$-dimensional complex projective space $\CPn$ and on the  $n$-dimensional quaternion projective space $\HPn$. We classify the minimal sets of Lefschetz periods for such Morse--Smale diffeomorphisms. This characterization is done using the induced maps on the homology. The main tool used is the Lefschetz zeta function.
\end{abstract}

\section{Introduction}

Understanding the periodic orbits and the set of periods of a map is a very important problem in dynamical systems. The Lefschetz numbers are one of the most useful tools to study the existence of fixed points and periodic orbits of self-maps on compact manifolds. In this paper we obtain information on the set of periods of certain diffeomorphisms on compact manifolds using the Lefschetz zeta function, which is a generating function of the Lefschetz numbers of the iterates of a map.

Let $M$ be a compact manifold, let $f: M \to M$ be a continuous map, and denote by $f^m$  the $m-$th iterate of $f$. A point $x \in M$ such that $f(x)= x$ is called a \emph{fixed point}, or a {\it periodic point of period} $1$ of $f$. A point $x \in M$ is called \emph{periodic of period $k>1$} if $f^k(x)=x$ and $f^m(x)\neq x$ for all $m=1,\ldots,k-1$, and the set formed by the iterates of $x$, i.e. $\{x, \, f(x), \, \dots, \, f^{k-1}(x) \}$, is called the \emph{periodic orbit} of the periodic point $x$.

As usual $\N$ denotes the set of all positive integers. Then $\text{Per}(f)$ is the set $\{k \in \N \, : \, f\, \, \text{has a periodic orbit of period} \, k \}.$

A fixed point $x$ of a $C^1$ map $f$ is called \emph{hyperbolic} if all the eigenvalues of $Df(x)$ have modulus different than one. A periodic point $x$ of $f$ of  period $k$ is called a \emph{hyperbolic periodic point}  if it is a hyperbolic fixed point of $f^k$.

We denote by $\text{Diff}(M)$ the space of all $C^1$ diffeomorphisms on a compact Riemannian manifold $M$. 
We say that two maps $f, \, g \in \text{Diff}(M)$ are \emph{topologically equivalent} if there exists a homeomorphism $h : M \to M$ such that $h \circ f = g \circ h$. A map $f \in \text{Diff}(M)$ is called \emph{structurally stable} if there exists a neighbourhood $U \subset \text{Diff}(M)$ of $f$ such that every $g\in U$ is topologically equivalent to $f$.

A map $f \in \text{Diff}(M)$ is \emph{quasi unipotent} on homology if all the eigenvalues of the nontrivial induced maps on the homology groups of $M$ with rational coefficients, are roots of unity. We denote by $\mathcal{F}(M)$ the set of elements of $\text{Diff}(M)$ being quasi unipotent on homology and having finitely many periodic points, all of them hyperbolic. Among the maps in $\mathcal{F}(M)$ there are the \emph{Morse-Smale diffeomorphisms} (see \cite{fs,shub}). Those are very important from the dynamical point of view because the family of the Morse--Smale diffeomorphisms is structurally stable inside the class of all diffeomorphisms (see for details \cite{fs, PS, robinson, smale}).

Even if we will refer to Morse--Smale diffeomorphisms along the paper because of their dynamical interest, all the results hold for any map of the class $\mathcal{F} (M)$.
In order to define the Morse-Smale diffeomorphisms we introduce the following concepts. 

We say that $x \in M$ is a \emph{nonwandering point} of $f$ if for any neighborhood $U$ of $x$ there is a positive integer $m$ such that $f^m(U)\cap U \neq \emptyset$. We denote by $\Omega(f)$ the set of nonwandering points of $f$. Clearly if $\gamma$ is a periodic orbit of $f$, then $\gamma \subseteq \Omega(f)$.

Let $d$ be the metric on $M$ and suppose that $x \in M$ is a hyperbolic fixed point of $f$. We define the \emph{stable manifold} of $x$ as the set
$$
W^s(x) = \{ y \in M \, : \, d(x, \, f^m(y)) \to 0 \text{ as } m \to \infty \},
$$
and the unstable manifold of $x$ as the set
$$
W^u(x) = \{ y \in M \, : \, d(x, \, f^{-m}(y)) \to 0 \text{ as } m \to \infty \}.
$$

In the same way we define the stable and unstable manifolds of a hyperbolic periodic point $x \in M$ of period $k$ as the stable and unstable manifolds of the hyperbolic fixed point $x$ under $f^k$, respectively. We say that the submanifolds $W^s(x)$ and $W^u(x)$ have a \emph{transversal intersection} if at every point of intersection, their separate tangent spaces at that point together generate the tangent space of the ambient manifold $M$ at that point.

A diffeomorphism $f: M \to M$ is \emph{Morse--Smale} if
\begin{itemize}
\item[$(1)$] $\Omega(f)$ is finite,
\item[$(2)$] all the periodic points of $f$ are hyperbolic, and
\item[$(3)$] for every $x, \, y \in \Omega(f)$, $W^s(x)$ and $W^u(y)$ have a transversal intersection.
\end{itemize}

It is known that condition $(1)$ implies that $\Omega(f)$ is the set of all periodic points of $f$, see again  \cite{fs, robinson, smale}.

In the last quarter of the 20th century there appeared some papers dedicated to understand the connections between the dynamics of the Morse--Smale diffeomorphisms and the topology of the manifold where they are defined. Without trying to be exhaustive, see for instance \cite{FN, Na, shub, SS, smale}. This interest continues during this first part of the 21st century, see for example \cite{BGS, BGP, GLM, LS1, LS2, LS3}.

In this paper we study the set of periods of Morse--Smale diffeomorphisms on the $n-$dimensional sphere $\SSS^n$, on products of two spheres of arbitrary dimension $\SSS^m \times \SSS^n$ with $m \neq n$, and on the projective spaces $\CPn$ and $\HPn$. More precisely, our goal is to describe the set $\text{MPer}_L(f)$ (see Definition \ref{def_MPerLf}) that those diffeomorphisms  can exhibit for arbitrary values of $n$ and $m$.

The set of periods for Morse--Smale diffeomorphisms on a product of any number of spheres of the same dimension has been studied in  \cite{BGS}. For the particular case of $\SSS^n$, we give more detailed results considering the orientation of the diffeomorphisms. The set of periods for Morse--Smale diffeomorphims on the two-dimensional sphere has been studied with more details in \cite{BHN, GL1}. 

In \cite{graff-justyna} the authors study the Lefschetz periodic point free self-maps on $\SSS^m \times \SSS^n$, $\CPn$ and $\HPn$. Our results give extended information in the same line for the Morse--Smale diffeomorphisms.

The main results of this paper are Theorems \ref{th_sn}, \ref{th_mn_p} and \ref{th_cpn_hpn}, where we characterize the possible sets for $\text{MPer}_L(f)$ in function of the action of $f$ on the homology and on the parity of the numbers $n$ and $m$.

Related with these results the reader can look at the set of periods for homeomorphisms (respectively continuous maps) on $\SSS^n$ and on $\SSS^m \times \SSS^n$ which have been studied in \cite{GL3} (respectively \cite{GL4}).

We also remark that the results obtained along this paper hold 
in any compact manifold with the same homology as the manifolds considered here. More precisely, they hold for any manifold homotopy equivalent to $\SSS^n$, $\SSS^m \times \SSS^n$, $\CPn$ and $\HPn$, respectively.

\section{Lefschetz numbers and the Lefschetz zeta function} \label{s2}

Let $f: M \to M$ be a continuous map on a compact manifold of dimension $n$. We denote by  $H_0(M, \, \Q), \, \dots, \, H_n(M, \, \Q)$ the homology groups of $M$ with rational coefficients. A continuous map $f: M \to M$ induces $n+1$ morphisms on the homology groups of $M$,
$
f_{*i} : H_i(M, \, \Q) \to H_i(M, \, \Q), \, i \in \{0,\,  \dots, \, n \}.
$
For more details, see \cite{vick}.

\begin{definition}
Let $M$ be a compact manifold and let $f: M\to M$ be a continuous map. The \emph{Lefschetz number of $f$} is defined as
$$
L(f) = \sum_{i=0}^{n} \,  (-1)^i \, \text{tr}(f_{*i}),
$$
where tr$(f_{*i})$ denotes the trace of $f_{*i}$.
\end{definition}

A very important result which relates the Lefschetz number of a map $f$ and the existence of fixed points of $f$ is the Lefschetz Fixed Point Theorem, which says that if $L(f) \neq 0$, then $f$ has a fixed point. For a proof, see \cite{brown}.

In order to obtain information about the set of periods of a map $f$ it is useful to have information of the sequence $\{L(f^m)\}_{m=0}^\infty$ of the Lefschetz numbers of all the iterates of $f$. 

\begin{definition}\label{zeta_function}
Let $f: M \to M$ be a continuous map on a compact manifold. The Lefschetz zeta function of $f$ is defined as
$$
\mathcal{Z}_f(t) = \exp \bigg{(} \sum_{m=1}^\infty \frac{L(f^m)}{m} \, t^m \bigg{)}.
$$
\end{definition}

This function generates the whole sequence of Lefschetz numbers of $f$, and it may be independently computed through
\begin{equation} \label{zeta_product}
\mathcal{Z}_f(t) = \prod_{k=0}^n \det \, (I_{n_k} - t\, f_{*k})^{(-1)^{k+1}},
\end{equation}
(see for more details \cite{franks}) where $n$ is the dimension of $M$, $n_k$ is the dimension of  $H_k(M, \, \mathbb{Q})$, $I_{n_k}$ is the $n_k \times n_k$ identity matrix, and we take $\det \, (I_{n_k} - t \, f_{*k}) = 1$ if $n_k = 0$. Note that the expression given in \eqref{zeta_product} is a rational function of $t$. 

For a $C^1$ map $f$ having a finite number of periodic points, all of them being hyperbolic, we give another characterization of the Lefschetz zeta function introduced by Franks in \cite{franks}.

Let $f: M \to M$ be a $C^1$ map on a compact manifold without boundary, and let $\gamma$ be a hyperbolic periodic orbit of $f$ of period $p$. For each $x \in \gamma$, let $E_x^u$ denote the linear subspace of the tangent space $T_xM$ of $M$ at $x$, generated by the eigenvectors of $Df^p$ corresponding to the eigenvalues whose moduli are greater than $1$, and let $E_x^s$ denote the linear subspace of $T_xM$ generated by the remaining eigenvectors. We denote by $u$ and $s$ the dimensions of the spaces $E^u_x$ and $E^s_x$, respectively.

We define the \emph{orientation type} $\Delta$ of $\gamma$ to be $+1$ if $Df^p(x): E_x^u \to E_x^u$ preserves orientation, that is, if $\det Df^p(x) >0$ with $x\in \gamma$, and to be $-1$ if it reverses orientation, that is, if $\det Df^p(x) <0$. Note that the definitions of $\Delta$ and $u$ do not depend on the periodic point $x$, but only on the periodic orbit $\gamma$.

For a $C^1$ map $f: M \to M$ having only finitely many periodic orbits, all of them hyperbolic, we define the \emph{periodic data}, $\Sigma$, as the collection composed by all triples $(p, \, u, \, \Delta)$ corresponding to all the hyperbolic periodic orbits of $f$, where a same triple can occur more than once provided that it corresponds to different periodic orbits. The following result was proved by Franks in \cite{franks}.

\begin{theorem} \label{th_franks}
Let $f: M \to M$ be a $C^1$ map on a compact manifold without boundary, having finitely many periodic points, all hyperbolic, and let $\Sigma$ be the periodic data of $f$. Then the Lefschetz zeta function of $f$ satisfies
\begin{equation*} \label{zeta_data}
\mathcal{Z}_f(t) = \prod_{(p, \, u, \, \Delta) \in \Sigma} \, (1-\Delta\, t^p)^{(-1)^{u+1}}.
\end{equation*}
\end{theorem}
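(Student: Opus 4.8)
The plan is to compute every Lefschetz number $L(f^m)$ by localizing it at the periodic points and then assembling the generating series. First I would invoke the Lefschetz--Hopf fixed point theorem to write
$$
L(f^m) = \sum_{x \in \text{Fix}(f^m)} \mathrm{ind}(f^m, x),
$$
where $\mathrm{ind}$ denotes the local fixed point index. Since $f$ has only finitely many periodic points, all hyperbolic, this sum is finite and every term is well defined.

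The key computation is the local index at a hyperbolic fixed point. For $x$ a hyperbolic fixed point of $g := f^m$, the index equals $\mathrm{sign}\,\det(I - Dg(x))$. I would split $T_xM = E^s_x \oplus E^u_x$ and argue that $\det(I - Dg(x)|_{E^s_x}) > 0$, since all eigenvalues lie inside the unit disc so that real eigenvalues give positive factors and complex ones pair into factors $|1-\la|^2$. On $E^u_x$ one writes $\det(I - Dg(x)|_{E^u_x}) = \det(-Dg(x)|_{E^u_x})\,\det(I - Dg(x)^{-1}|_{E^u_x})$, where the second factor is positive because $Dg(x)^{-1}|_{E^u_x}$ has all eigenvalues inside the unit disc, yielding $\mathrm{sign}\,\det(I - Dg(x)|_{E^u_x}) = (-1)^u \Delta$. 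Hence $\mathrm{ind}(f^m, x) = (-1)^u \Delta$, with $u$ and $\Delta$ the unstable dimension and orientation type of $x$ as a fixed point of $f^m$.

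Next I would translate this into the periodic data. A periodic orbit $\gamma$ of period $p$ contributes fixed points of $f^m$ exactly when $p \mid m$, and then all $p$ of its points are fixed by $f^m$. For $x \in \gamma$ the chain rule gives $Df^m(x) = (Df^p(x))^{m/p}$, so the unstable dimension is unchanged while the orientation type becomes $\Delta^{m/p}$. Summing the index $(-1)^u \Delta^{m/p}$ over the $p$ points of $\gamma$ gives
$$
L(f^m) = \sum_{\substack{(p,\,u,\,\Delta) \,\in\, \Sigma \\ p \,\mid\, m}} p\,(-1)^u\, \Delta^{m/p}.
$$

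Finally I would substitute this into Definition \ref{zeta_function}, interchange the summations by setting $m = pk$ for each triple, and recognize the inner series:
$$
\sum_{m=1}^\infty \frac{L(f^m)}{m}\, t^m = \sum_{(p,\,u,\,\Delta)\,\in\,\Sigma} (-1)^u \sum_{k=1}^\infty \frac{(\Delta t^p)^k}{k} = \sum_{(p,\,u,\,\Delta)\,\in\,\Sigma} (-1)^{u}\,\big(-\log(1 - \Delta t^p)\big).
$$
Exponentiating, and using $-(-1)^u = (-1)^{u+1}$, turns the sum into the claimed product $\prod_{(p,u,\Delta)\in\Sigma}(1-\Delta t^p)^{(-1)^{u+1}}$. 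The main obstacle is the index computation of the second step: getting the sign of $\det(I - Dg(x))$ correct on the unstable subspace, and verifying that $u$ and $\Delta$ transform as claimed under passing from $f^p$ to $f^m$. Once these are in place, the remaining steps are a formal manipulation of the generating function.
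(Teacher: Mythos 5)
Your argument is correct and complete: the index formula $\mathrm{ind}(f^m,x)=(-1)^u\Delta^{m/p}$ at a hyperbolic periodic point, the resulting expression $L(f^m)=\sum_{p\mid m}p\,(-1)^u\Delta^{m/p}$, and the logarithmic resummation are exactly the classical proof of this theorem from Franks' monograph, which the paper cites rather than reproduces. No gaps to report.
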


Note that the Morse--Smale diffeomorphisms satisfy the hypotheses of Theorem  \ref{th_franks}. Then the result  will be useful to obtain information on the set of periods for Morse-Smale diffeomorphisms from the comparsion of the expressions of the Lefschetz zeta functions in Theorem \ref{th_franks} and in \eqref{zeta_product}.

\section{Minimal set of Lefschetz periods of Morse--Smale diffeomorphisms}\label{s3}

\begin{definition} \label{def_MPerLf}
Let $f$ be a map satisfying the hypotheses of Theorem $\ref{th_franks}$. The \emph{minimal set of Lefschetz periods of $f$}, $\text{MPer}_L(f)$, is the set given by the intersection of all sets of periods forced by the different representations of $\mathcal{Z}_f(t)$ as products of the form $(1 \pm t^p)^{\pm1}$.
\end{definition}

As an example consider the following Lefschetz zeta function of a Morse--Smale diffeomorphism $f$ on the four-dimensional torus $\mathbb{T}^4$,
\begin{align}
\begin{split} \label{zeta_tor_exemple}
\mathcal{Z}_f(t) & = \frac{(1-t^3)^2(1+t^3)}{(1-t)^6(1+t)^3} 
= \frac{(1-t^3)(1-t^6)}{(1-t)^6(1+t)^3} = \frac{(1-t^3)(1-t^6)}{(1-t)^3(1-t^2)^3}  \\
&= \frac{(1-t^3)^2(1+t^3)}{(1-t)^3(1-t^2)^3},
 \end{split}
\end{align}
see for instance \cite{GL2}, which can be expressed in four ways as a product of factors of the form $(1 \pm t^p)^{\pm1}$ as a quotient of polynomials of degree $9$.

By Theorem \ref{th_franks}, the first expression of the Lefschetz zeta function \eqref{zeta_tor_exemple} ensures the existence of periodic orbits of periods $1$ and $3$ for $f$. In the same way, the second expression of $\mathcal{Z}_f(t)$ provides the periods $\{1, \, 3, \, 6\}$ for $f$, the third expression of $\mathcal{Z}_f(t)$  provides the periods $\{1, \, 2, \, 3, \, 6\}$, and finally the fourth expression provides the periods $\{1, \, 2, \, 3\}$. In this case we have that the minimal set of Lefschetz periods of $f$ is
$$
\text{MPer}_L(f) = \{1, \, 3\} \cap \{1, \, 3, \, 6\} \cap \{1,\, 2, \, 3, \, 6\} \cap \{1, \, 2, \, 3\} = \{1, \, 3\}.
$$
Note that if $\mathcal{Z}_f(t)$ is constant equal to $1$, then $\text{MPer}_L(f) = \emptyset$.

\begin{remark} \label{tres_maneres}
Even if the minimal set of Lefschetz periods of a Morse--Smale diffeomorphism is empty, one can still obtain some information on the set of periods from Theorem \ref{th_franks}. For example, suppose that the Lefschetz zeta function of a map $f$ satisfying the hypotheses of Theorem $\ref{th_franks}$ is $\mathcal{Z}_f(t) = 1+t^2$. It can be expressed as products of terms of the form $(1\pm t^p)^{\pm1}$ in infinitely many ways,
\begin{equation*}
1+t^2 = \frac{1-t^4}{1-t^2} = \frac{1-t^4}{(1+t)(1-t)} = \frac{1-t^8}{(1+t)(1-t)(1+t^4)},
\end{equation*}
and so on. In this case we have clearly  $\text{MPer}_L(f) = \emptyset$, but each of the infinitely many expressions of $\MZ_f(t)$ forces either the period $2$ or the periods $\{2, \,  4\}$, or the periods $\{1, \,  4\}$. Then, $f$ has either a periodic orbit of period $2$, or  periodic orbits of periods $2$ and $4$, or periodic orbits of periods $1$ and $4$.
\end{remark}

\begin{remark} \label{remark_nparell}
By Theorem $\ref{th_franks}$ an even period $n$, can never be contained in the set $\text{MPer}_L(f)$. Indeed, the following expressions
$$
1-t^n = (1+t^{n/2})(1-t^{n/2}), \quad 1+t^n = \frac{1-t^{2n}}{1-t^n} = \frac{1-t^{2n}}{(1+t^{n/2})(1-t^{n/2})},
$$
show that if the term $1-t^n$ or $1+t^n$, with $n$ even, appears in one of the expressions of $\mathcal{Z}_f(t)$, one can always obtain a new expression of $\mathcal{Z}_f(t)$ where the period $n$ does not appear. 
\end{remark}

\begin{remark} \label{remark_una_expr}
Along the paper, for every possible Lefschetz zeta function of a given map $f$, in general we will write only one of the possible equivalent expressions of $\mathcal{Z}_f(t)$. We will provide the expression of $\mathcal{Z}_f(t)$ that forces a smaller set of periods, and consequently it will be sufficient to describe the set $\text{MPer}_L(f)$.
From remark  \ref{remark_nparell} we note that providing an expression of $\mathcal{Z}_f(f)$ that forces only even periods is sufficient to ensure that $\text{MPer}_L(f) = \emptyset$.
\end{remark}

Finally note that  $\text{MPer}_L(f)$ is contained in the set of periods that are conserved under homotopy. Indeed, for a Morse--Smale diffeomorphism $f: M \to M$ on a compact manifold consider the set
$
\text{MPer}_{ms}(f) :=  \bigcap_{h\sim f} \, \text{Per}(h),
$
where $h$ runs over all the Morse--Smale diffeomorphisms of $M$ which are homotopic to $f$. 
Then it is clear that
$$
\text{MPer}_L(f) \subseteq  \text{MPer}_{ms}(f),
$$
because two homotopic maps on a manifold $M$ induce  the same Lefschetz zeta functions.

\section{Periods of Morse--Smale diffeomorphisms on $\SSS^n$}\label{s4}

Let $n \in \N$ and let $\SSS^n$ be the $n-$dimensional sphere. 
The homology groups of $\SSS^n$ over $\Q$ are
\begin{equation*} \label{homologia_sn}
H_k(\mathbb{S}^n, \, \Q) =
\begin{cases}
\Q \quad \text{if} \; \, k  \in \{0, \, n\}, \\
0 \quad \text{otherwise}.
\end{cases}
\end{equation*}

For a continuous map $f: \SSS^n \to \SSS^n$ the nontrivial induced maps on the homology can be written as the integer matrices $f_{*0} = (1)$ and $f_{*n} = (d)$, where $d$ is called the \emph{degree} of $f$.

Let $f: \SSS^n \to \SSS^n$ be a Morse--Smale diffeomorphism.
As we already mentioned before, the linear maps induced on the homology are quasi unipotent, which means that all their eigenvalues are roots of unity. Then we must have either $d=1$, or $d=-1$. Also, the orientation of $f$ is constant on $\SSS^n$ and is determined by the sign of $d$.

\begin{theorem} \label{th_sn}
Let $f: \mbS^n \to \mbS^n$ be a Morse--Smale diffeomorphism. Then, 
\begin{enumerate}[(a)]
\item If $n$ is even and $f$ preserves the orientation, then $\text{MPer}_L(f) = \{1\}$.
\item If $n$ is even and $f$ reverses the orientation, then $\text{MPer}_L(f) = \emptyset$ but $\{1, \, 2\} \cap \text{Per}(f) \neq \emptyset$. 
\item If $n$ is odd and $f$ preserves the orientation, then $\text{MPer}_L(f) =\emptyset$.
\item If $n$ is odd and $f$ reverses the orientation, then $\text{MPer}_L(f) =\{1\}$.
\end{enumerate}
\end{theorem}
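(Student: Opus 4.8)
The plan is to compute the Lefschetz zeta function explicitly from \eqref{zeta_product} and then read off $\text{MPer}_L(f)$ in each of the four cases. Since the only nontrivial homology groups are $H_0(\SSS^n,\Q)$ and $H_n(\SSS^n,\Q)$, with $f_{*0}=(1)$ and $f_{*n}=(d)$ where $d\in\{1,-1\}$, formula \eqref{zeta_product} gives
\begin{equation*}
\mathcal{Z}_f(t) = (1-t)^{-1}\,(1-dt)^{(-1)^{n+1}}.
\end{equation*}
Specializing the parity of $n$ and the sign of $d$ (with $d=1$ in the orientation-preserving case and $d=-1$ in the orientation-reversing one) yields $(1-t)^{-2}$ in case (a), $(1-t)^{-1}(1+t)^{-1}=(1-t^2)^{-1}$ in case (b), the constant $1$ in case (c), and $(1+t)(1-t)^{-1}$ in case (d). All subsequent work is purely combinatorial on these four rational functions.

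The key device for the cases where $1\in\text{MPer}_L(f)$ will be the following observation. In any representation $\mathcal{Z}_f(t)=\prod_i(1\pm t^{p_i})^{\epsilon_i}$, the coefficient of $t$ equals $\sum_{i:\,p_i=1}(\pm\epsilon_i)$, because a factor $(1\pm t^{p_i})^{\epsilon_i}=1\pm\epsilon_i t^{p_i}+O(t^{2})$ contributes to the linear term only when $p_i=1$. On the other hand $\mathcal{Z}_f(t)=\exp\!\big(\sum_m L(f^m)t^m/m\big)=1+L(f)\,t+O(t^2)$, so the coefficient of $t$ is exactly $L(f)$. Hence whenever $L(f)\neq 0$ every representation must contain a factor with $p_i=1$; that is, the period $1$ is forced in all representations, so $1\in\text{MPer}_L(f)$. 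A direct trace computation gives $L(f)=1+(-1)^n d$.

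With this, cases (a), (c) and (d) follow quickly. In case (a), $L(f)=1+d=2\neq0$, so $1\in\text{MPer}_L(f)$, while the representation $(1-t)^{-2}$ has period set exactly $\{1\}$; hence $\text{MPer}_L(f)=\{1\}$. In case (d), $n$ is odd and $d=-1$, so $L(f)=1+(-1)^n(-1)=2\neq0$, again forcing $1\in\text{MPer}_L(f)$, and the representation $(1+t)(1-t)^{-1}$ has period set $\{1\}$, giving $\text{MPer}_L(f)=\{1\}$. In case (c) we have $\mathcal{Z}_f(t)\equiv1$, so $\text{MPer}_L(f)=\emptyset$ as noted after Definition \ref{def_MPerLf}.

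Case (b) is where the two halves of the statement separate. Here $\mathcal{Z}_f(t)=(1-t^2)^{-1}$ admits both the representation $(1-t^2)^{-1}$, with period set $\{2\}$, and the representation $(1-t)^{-1}(1+t)^{-1}$, with period set $\{1\}$; their intersection is empty, so $\text{MPer}_L(f)=\emptyset$. For the second assertion one must use the full sequence of Lefschetz numbers rather than a single representation: computing $L(f^2)=1+(-1)^n d^2=2\neq0$ and applying the Lefschetz Fixed Point Theorem to $f^2$ produces a fixed point of $f^2$, which is a periodic point of $f$ of period $1$ or $2$, whence $\{1,2\}\cap\text{Per}(f)\neq\emptyset$. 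The main obstacle to be careful about throughout is precisely the distinction between a period lying in $\text{Per}(f)$ and lying in $\text{MPer}_L(f)$: the Lefschetz Fixed Point Theorem only yields the former, so to place $1$ in $\text{MPer}_L(f)$ in cases (a) and (d) one genuinely needs the coefficient-of-$t$ argument, which certifies that the period $1$ survives every algebraic rewriting of $\mathcal{Z}_f(t)$.
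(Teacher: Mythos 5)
Your computation of $\mathcal{Z}_f(t)$ from \eqref{zeta_product} and the resulting four-way case split coincide exactly with the paper's proof; where you diverge is in how you certify the two delicate claims, and in both places your route is sound and in fact more explicit than the paper's. For cases (a) and (d) the paper simply asserts that ``any other expression of the same Lefschetz zeta function as a product of terms of the form $(1\pm t^p)^{\pm 1}$ would provide at least the period $1$''; you actually prove this via the observation that the coefficient of $t$ in any such product is $\sum_{i:\,p_i=1}\pm\epsilon_i$ while it must equal $L(f)=1+(-1)^nd\neq 0$, so every representation is forced to contain a degree-one factor. This is a genuine (if small) added lemma that closes a gap the paper leaves to the reader, and it generalizes cleanly: $L(f^k)\neq 0$ forces some $p\mid k$ to appear in every representation. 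For case (b) the paper invokes Remarks \ref{tres_maneres} and \ref{remark_nparell} to argue combinatorially over all rewritings of $(1-t^2)^{-1}$, whereas you bypass the combinatorics entirely by computing $L(f^2)=2\neq 0$ and applying the Lefschetz Fixed Point Theorem to $f^2$; this is more elementary and avoids having to survey the infinitely many representations, at the cost of stepping outside the ``minimal Lefschetz periods'' formalism (which, as you correctly note, is harmless since the claim there is about $\text{Per}(f)$, not $\text{MPer}_L(f)$). Both of your substitutions are correct; only the minor notational slip $(1\pm t^{p_i})^{\epsilon_i}=1\pm\epsilon_it^{p_i}+O(t^2)$, which should read $O(t^{2p_i})$ (or $O(t^{p_i+1})$), deserves a correction, and it does not affect the argument since only the coefficient of $t^1$ is used.
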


\begin{proof}
Computing the Lefschetz zeta function for $f$ using equation \eqref{zeta_product}, we get
$$
\mathcal{Z}_f(t) = \prod_{k=0}^n \det \, (I_{n_k} - t\, f_{*k})^{(-1)^{k+1}}= (1-t)^{-1} \, (1-td)^{(-1)^{n+1}},
$$
and so for $n$ even we have
\begin{align*}
\mathcal{Z}_f(t)  = \frac{1}{(1-t)^2}  \, \text{ if } \,  d=1, \qquad
\mathcal{Z}_f(t)   = \frac{1}{(1-t)(1+t)} =  \frac{1}{1-t^2} \,  \text{ if } \, d=-1,
\end{align*}
and for $n$ odd we have
\begin{align*}
\mathcal{Z}_f(t)  \equiv 1 \, \text{ if } \, d=1, \qquad \mathcal{Z}_f(t)  = \frac{1+t}{1-t} \, \text{ if } \, d=-1.
\end{align*}
        
It is clear that $f$ satisfies the hypotheses of Theorem \ref{th_franks}. 
Then the statements follow directly applying Theorem \ref{th_franks} to each of the expressions obtained for the Lefschetz zeta function $\mathcal{Z}_f(t)$.
         
Indeed, for $1/(1-t)^2$ and $(1+t)/(1-t)$ we have $\text{MPer}_L(f) = \{1\}$, because any other expression of the same Lefschetz zeta function as a product of terms the form $(1\pm t^p)^{\pm 1}$ would provide at least the period $1$.
        
For the function $1/(1-t^2) = 1/((1-t)(1+t))$ it is clear that $\text{MPer}_L(f) = \emptyset$,  but by Remarks \ref{tres_maneres} and \ref{remark_nparell}, we can ensure that $\{1, \, 2\} \cap \text{Per}(f) \neq \emptyset$.  
Finally  we have $\text{MPer}_L(f) = \emptyset$ when $\mathcal{Z}_f(t) \equiv 1$.
\end{proof}

\section{Periods of Morse--Smale diffeomorphisms on $\SSS^m \times \SSS^n$}\label{s5}


Let $m, n \in \N$, $m \neq n$, and consider the product of spheres $\mathbb{S}^m \times \mathbb{S}^n$, with $m \neq n$. Applying Künneth's formula, the homology groups over $\Q$ of $\mathbb{S}^m \times \mathbb{S}^n$ can be easily computed and are given by
\begin{equation*}
H_k(\mathbb{S}^m \times \mathbb{S}^n, \, \Q) =
\begin{cases}
\Q \quad \text{if} \; \, k \in \{ 0,\, m,\, n,\, m+n \}, \\
0 \quad \text{otherwise}.
\end{cases}
\end{equation*}

Then for a continuous map $f: \SSS^m \times \SSS^n \to \SSS^m \times \SSS^n$ the linear maps induced on the homology are given as the integer matrices $f_{*0} = (1)$, $f_{*m} = (a)$, $f_{*n} = (b)$, $f_{*m+n} = (d)$, where $d$ is the degree of $f$. The rest of the induced maps are the zero map.

Let $f: \SSS^m \times \SSS^n \to \SSS^m \times \SSS^n$ be a Morse--Smale diffeomorphism. Since the linear maps induced by $f$ on the homology are quasi unipotent, in this case we have $a, \, b, \, d \in\{-1, \, 1\}$. Moreover, by the structure of the cohomology ring of $\SSS^m \times \SSS^n$, one has always that $ab=d$ (see \cite{graff-justyna}) and hence the possibilities for these values are restricted to $a=b =c=1$, $\{a, b\} = \{-1, 1\}$ and $d=-1$, and $a = b = -1, \, d=1$.

\begin{theorem} \label{th_mn_p}
Let $M= \SSS^m \times \SSS^n$ be with $m \neq n$, and let $f: M \to M$ be a Morse--Smale diffeomorphism.
\begin{enumerate}[(i)]
\item If $m$ and $n$ are even, then
\begin{enumerate}[(a)]
\item If $a=b=c=1$, then $\text{MPer}_L(f) = \{1\}$.
\item Otherwise, $\text{MPer}_L(f) = \emptyset$ but $\{1, \, 2\} \cap \text{Per}(f) \neq \emptyset$.
\end{enumerate}
\item If $m$ and $n$ are odd, then
\begin{enumerate}[(a)]
\item If $a=b=-1$ and $d=1$,  then $\text{MPer}_L(f) = \{1\} $.
\item Otherwise, $\text{MPer}_L(f) = \emptyset$.
\end{enumerate}
\item If $m$ is even and $n$ is odd, then
\begin{enumerate}[(a)]
\item If $b=d=-1$ and $a=1$, then $\text{MPer}_L(f) =  \{1 \}$.
\item Otherwise, $\text{MPer}_L(f) = \emptyset$.
\end{enumerate}
\item If $m$ is odd and $n$ is even, then
\begin{enumerate}[(a)]
\item If $a=d=-1$ and $b=1$, then $\text{MPer}_L(f) = \{1\}$.
\item Otherwise, $\text{MPer}_L(f) = \emptyset$.
\end{enumerate}
\end{enumerate}
\end{theorem}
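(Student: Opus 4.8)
The plan is to compute the Lefschetz zeta function explicitly from the product formula \eqref{zeta_product} and then extract $\text{MPer}_L(f)$ from its factorizations, splitting into the four parity cases. Using $f_{*0}=(1)$, $f_{*m}=(a)$, $f_{*n}=(b)$, $f_{*m+n}=(d)$ with $a,b,d\in\{-1,1\}$ and $d=ab$, formula \eqref{zeta_product} yields
\begin{equation*}
\mathcal{Z}_f(t) = (1-t)^{-1}\,(1-at)^{(-1)^{m+1}}\,(1-bt)^{(-1)^{n+1}}\,(1-dt)^{(-1)^{m+n+1}}.
\end{equation*}
The three exponents depend only on the parities of $m$ and $n$. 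First I would fix each parity case (i)--(iv), and within it substitute the four admissible triples $(a,b,d)\in\{(1,1,1),\,(1,-1,-1),\,(-1,1,-1),\,(-1,-1,1)\}$ permitted by $d=ab$, simplifying in each instance.

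This bookkeeping collapses $\mathcal{Z}_f(t)$ to a short list of rational functions: the constant $1$, or $(1-t)^{-4}$, or $(1+t)^2/(1-t)^2$, or $(1-t^2)^{-2}$. For example, in case (ii) the two numerator exponents are $+1$ and the two denominator exponents are $-1$, so $a=b=-1,\,d=1$ leaves $(1+t)^2/(1-t)^2$ while the remaining triples cancel to $1$; cases (iii) and (iv) are mirror images obtained by interchanging the roles of $m$ and $n$. I expect this step to be entirely routine, the only care being to track which exponent equals $+1$ and which equals $-1$.

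The conceptual heart of the argument, and the step I expect to be the real obstacle, is the passage from one displayed factorization to a statement about \emph{all} factorizations, that is, the lower bound for $\text{MPer}_L(f)$. The upper bound is immediate: a single representation as a product of terms $(1\pm t^p)^{\pm1}$ already shows $\text{MPer}_L(f)$ is contained in the set of periods it forces, so $\text{MPer}_L(f)\subseteq\{1\}$ when only $p=1$ factors appear, and $\text{MPer}_L(f)=\emptyset$ when $\mathcal{Z}_f(t)\equiv 1$ or when some representation uses only even $p$ (the latter via Remark \ref{remark_nparell}). For the lower bound I would use that, as a formal power series, $\mathcal{Z}_f(t)=1+L(f)\,t+\cdots$, so the coefficient of $t^1$ equals $L(f)$. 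Since any factor $(1-\Delta t^p)^{(-1)^{u+1}}$ with $p\geq2$ contributes nothing to the coefficient of $t^1$, a nonzero value of that coefficient forces a factor with $p=1$ in every representation, whence period $1\in\text{MPer}_L(f)$. Both $(1-t)^{-4}$ and $(1+t)^2/(1-t)^2$ have coefficient of $t^1$ equal to $4\neq0$, which gives $\text{MPer}_L(f)=\{1\}$ in cases (i)(a), (ii)(a), (iii)(a) and (iv)(a).

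Finally, for the assertion $\{1,\,2\}\cap\text{Per}(f)\neq\emptyset$ in case (i)(b) I would run the same coefficient argument one degree higher. There $\mathcal{Z}_f(t)=(1-t^2)^{-2}=1+2t^2+\cdots$, so the coefficient of $t^1$ vanishes while that of $t^2$ equals $2\neq0$; since only factors with $p\in\{1,2\}$ can affect the coefficient of $t^2$, every representation must contain a factor with $p=1$ or $p=2$, and by Theorem \ref{th_franks} such a factor forces a periodic orbit of the corresponding period. That $\text{MPer}_L(f)=\emptyset$ in this case follows from the representation $(1-t^2)^{-2}$ together with Remark \ref{remark_nparell}. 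The remaining ``otherwise'' subcases of (ii)--(iv) all reduce to $\mathcal{Z}_f(t)\equiv1$, giving $\text{MPer}_L(f)=\emptyset$ directly.
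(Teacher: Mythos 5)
Your proposal is correct and follows essentially the same route as the paper: compute $\mathcal{Z}_f(t)$ from the product formula \eqref{zeta_product}, reduce by the parity case analysis to the four rational functions $1$, $(1-t)^{-4}$, $(1+t)^2/(1-t)^2$ and $(1-t^2)^{-2}$ (the paper's Tables \ref{taula_mn_p}--\ref{taula_m_s_n_p}), and conclude via Theorem \ref{th_franks} and Remarks \ref{tres_maneres} and \ref{remark_nparell}. Your only addition is the coefficient-of-$t^1$ (and of $t^2$) argument, which supplies an explicit justification for the step the paper merely asserts, namely that every representation of $(1-t)^{-4}$ or $(1+t)^2/(1-t)^2$ as a product of factors $(1\pm t^p)^{\pm1}$ must contain a factor with $p=1$.
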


\begin{proof}
Computing the Lefschetz zeta function for $f$ using equation \eqref{zeta_product} we obtain
\begin{equation*} \label{zeta_smxsn}
\begin{array}{rl}
\mathcal{Z}_f(t) =& \displaystyle\prod_{k=0}^{m+n} \det \, (I_{n_k} - t\, f_{*k})^{(-1)^{k+1}}\vspace{0,2cm}\\
=& (1-t)^{-1}(1-a\, t)^{(-1)^{m+1}} (1-b\, t)^{(-1)^{n+1}} (1-d \,  t)^{(-1)^{m+n+1}}.
\end{array}
\end{equation*}

Depending on whether $m$ and $n$ are even or odd, and for each allowed value of $a, \, b, \, d \in \{-1, \, 1\}$ with $ab=d$, the expressions obtained for the Lefschetz zeta functions in each case are displayed in Tables \ref{taula_mn_p}, \ref{taula_mn_s}, \ref{taula_m_p_n_s} and \ref{taula_m_s_n_p}.

The proof follows directly from the information obtained from the mentioned tables and applying Theorem \ref{th_franks}, taking into account the considerations of Remarks \ref{tres_maneres}, \ref{remark_nparell} and \ref{remark_una_expr}.
\end{proof}

\begin{table}[h]  \centering
\begin{tabular}{c  @{\hspace{1.3cm}} c}
\toprule[1.2pt]
Values for $a, \, b, \, d$ & $\MZ_{f}(t)$ \\ \midrule
$a=b=d=1$ & $\frac{1}{(1-t)^4}$ \\
$\{a, b, d\} = \{-1, 1\}$ with $ab=d$ & $\frac{1}{(1-t)^2} \frac{1}{(1+t)^2} = \frac{1}{(1-t^2)^2}$ \\ \bottomrule[1.2pt]
\end{tabular}
\caption{$\MZ_f(t)$ for a quasi-unipotent diffeomorphism $f$ on $\SSS^m \times \SSS^n$, with $m \neq n$ and $m, \, n$ even.}
\label{taula_mn_p}
\end{table}

\begin{table}[h]  \centering
\begin{tabular}{c @{\hspace{1.3cm}} c}
\toprule[1.2pt]
Values for $a, \, b, \, d$  & $\MZ_{f}(t)$ \\ \midrule
$a=b=d=1$ & $1$ \\
$a=b=-1, \, d=1$ &   $\frac{(1+t)^2}{(1-t)^2}$ \\
$\{a, \, b\} = \{-1, \, 1\}, \, d=-1$ & $1$ \\ \bottomrule[1.2pt]
\end{tabular}
\caption{$\MZ_f(t)$ for a quasi-unipotent diffeomorphism $f$ on $\SSS^m \times \SSS^n$, with $m \neq n$ and $m, \, n$ odd.}
\label{taula_mn_s}
\end{table}

\begin{table}[h]  \centering
\begin{tabular}{c @{\hspace{1.3cm}} c}
\toprule[1.2pt]
Values for $a, \, b, \, d$ & $\MZ_{f}(t)$ \\ \midrule
$a=b=d=1$ &  $1$ \\
$b=d=-1, \, a=1$ &  $\frac{(1+t)^2}{(1-t)^2}$ \\
$\{b, \, d\} = \{-1, \, 1\}, \, a=-1$ & $1$ \\ \bottomrule[1.2pt]
\end{tabular}
\caption{$\MZ_f(t)$ for a quasi-unipotent diffeomorphism $f$ on $\SSS^m \times \SSS^n$, with $m \neq n$ and $m$ even, $n$ odd.}
\label{taula_m_p_n_s}
\end{table}

\begin{table}[h]  \centering
\begin{tabular}{c @{\hspace{1.3cm}}  c}
\toprule[1.2pt]
Values for $a, \, b, \, d$ & $\MZ_{f}(t)$ \\ \midrule
$a=b=d=1$ & $1$ \\
$a=d=-1, \, b=1$ & $\frac{(1+t)^2}{(1-t)^2}$ \\
$\{a, \, d\} = \{-1, \, 1\}, \, b=-1$ & $1$ \\ \bottomrule[1.2pt]
\end{tabular}
\caption{$\MZ_f(t)$ for a quasi-unipotent diffeomorphism on $\SSS^m \times \SSS^n$, with $m \neq n$ and $m$ odd, $n$ even.}
\label{taula_m_s_n_p}
\end{table}

\section{Periods of Morse--Smale diffeomorphisms on $\CPn$ and $\HPn$}\label{s7}

Let $n \in \N$, $n \geq 1$ and let $\CPn$ be the $n$-dimensional complex projective space and let $\HPn$ be the  $n$-dimensional quaternion projective space. Along this section we consider these manifolds as real manifolds of dimension $2n$ and $4n$, respectively.

The homology groups of $\CPn$ over $\Q$ can be easily computed applying Künneth's formula and are given by
\begin{equation*} \label{homologia_sn}
    H_k(\CPn, \, \Q) = 
    \begin{cases}
    \Q \quad \text{if} \; \, k  \in \{0, \, 2, \, \dots, \,  2n\}, \\
    0 \quad \text{otherwise}.
    \end{cases}
\end{equation*}

For a continuous map $f: \CPn \to \CPn$, the  induced linear maps on the homology can be written as as integer matrices as 
$ f_{*k} = (d^{k/2}) $ for $k \in \{0, \, 2, \, 4, \, \dots , \, 2n\}$, with $ d \in \mathbb{Z}$, and $f_{*k} = (0)$ otherwise (see \cite{vick}).

Similarly the homology groups of $\HPn$ over $\Q$ are given by
\begin{equation*} 
H_k(\HPn, \, \Q) = 
\begin{cases}
\Q \quad \text{if} \; \, k  \in \{0, \, 4, \, \dots, \,  4n\}, \\
0 \quad \text{otherwise},
\end{cases}
\end{equation*}
and  for a continuous map $f: \HPn \to \HPn$, the  induced linear maps on the homology can be written  as 
$ f_{*k} = (d^{k/4}) $ for $k \in \{0, \, 4, \, 8, \, \dots , \, 4n\}$, with $ d \in \mathbb{Z}$, and $f_{*k} = (0)$ otherwise (see \cite{vick}).

Let $f: \CPn \to \CPn$ (respectively, $f: \HPn \to \HPn$) be a Morse-Smale diffeomorphism. 
Since the linear maps induced on the homology are quasi unipotent, we will have either $d=1$ or $d=-1$, which determines also whether $f$ preserves or reverses the orientation. 

\begin{theorem} \label{th_cpn_hpn}
Let $f: \mathbb{C}\text{\emph{\textbf{P}}}^n \to \mathbb{C}\text{\emph{\textbf{P}}}^n$ (respectively, $f: \mathbb{H}\text{\emph{\textbf{P}}}^n \to \mathbb{H}\text{\emph{\textbf{P}}}^n$)  be a Morse--Smale diffeomorphism. Then, 
\begin{enumerate}[(a)]
\item If $n$ is odd and $f$ reverses the orientation, then $\text{MPer}_L(f) = \emptyset$, but $\{1, \, 2\} \cap \text{Per}(f) \neq \emptyset$.
\item Otherwise, $\text{MPer}_L(f) =1$.
\end{enumerate}
\end{theorem}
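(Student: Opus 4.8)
The plan is to compute $\mathcal{Z}_f(t)$ explicitly from \eqref{zeta_product} in every admissible case and then read off $\text{MPer}_L(f)$ by comparison with Theorem \ref{th_franks} and the Remarks, exactly as in the proofs of Theorems \ref{th_sn} and \ref{th_mn_p}. The essential simplification is that for both $\CPn$ and $\HPn$ all nontrivial rational homology sits in even degrees, so every factor in \eqref{zeta_product} carries the exponent $(-1)^{k+1} = -1$. Writing the nonzero groups as $H_{2j}$ (resp. $H_{4j}$), using $f_{*} = (d^{\,j})$ and $d \in \{-1,1\}$, both manifolds give the same rational function
\[
\mathcal{Z}_f(t) = \prod_{j=0}^{n} \bigl(1 - d^{\,j}\, t\bigr)^{-1},
\]
so $\CPn$ and $\HPn$ can be handled simultaneously. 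I would also record that $f$ acts on top homology by $d^{\,n}$, hence $f$ reverses orientation precisely when $d^{\,n}=-1$, that is, when $d=-1$ and $n$ is odd; in all remaining cases $f$ preserves orientation. This already shows case (a) coincides with the single orientation-reversing situation $d=-1$, $n$ odd.

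Next I would split into three cases. If $d=1$ then $\mathcal{Z}_f(t)=(1-t)^{-(n+1)}$. If $d=-1$ with $n$ even, counting even and odd $j$ in $\{0,\dots,n\}$ yields
\[
\mathcal{Z}_f(t) = (1-t)^{-(n/2+1)}(1+t)^{-n/2} = (1-t)^{-1}(1-t^{2})^{-n/2}.
\]
These are the orientation-preserving cases covered by statement (b). Finally, for $d=-1$ with $n$ odd the even and odd values of $j$ balance, producing
\[
\mathcal{Z}_f(t) = (1-t)^{-(n+1)/2}(1+t)^{-(n+1)/2} = (1-t^{2})^{-(n+1)/2},
\]
the orientation-reversing case (a). Here only even periods appear, so Remarks \ref{remark_nparell} and \ref{remark_una_expr} give $\text{MPer}_L(f)=\emptyset$, while the representation $\bigl((1-t)(1+t)\bigr)^{-(n+1)/2}$ together with Remark \ref{tres_maneres} forces $\{1,2\}\cap\text{Per}(f)\neq\emptyset$, just as in Theorem \ref{th_sn}(b).

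The delicate point, and the step I expect to be the main obstacle, is justifying that period $1$ genuinely lies in $\text{MPer}_L(f)$ in the mixed case $(1-t)^{-(n/2+1)}(1+t)^{-n/2}$: the exhibited representation already forces only the period $1$, so the real content is that \emph{no} rewriting as a product $\prod_p(1\pm t^{p})^{\pm1}$ can delete every period-$1$ factor. For $(1-t)^{-(n+1)}$ this is transparent, but in the mixed case the interplay of $(1-t)$ and $(1+t)$ must be controlled. I would handle it via the cyclotomic factorization, using $1-t^{p}=\prod_{e\mid p}\Phi_e$ and $1+t^{p}=\prod_{e\mid 2p,\,e\nmid p}\Phi_e$, and introducing the representation-independent quantity $\lambda(\mathcal{Z})=\sum_{e\geq1}\mu(e)\,\operatorname{ord}_{\Phi_e}(\mathcal{Z})$, where $\mu$ is the Möbius function. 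A short Möbius computation gives $\lambda(1\pm t^{p})=0$ for every $p\geq2$, while $\lambda(1-t)=1$ and $\lambda(1+t)=-1$; hence in any representation $\lambda(\mathcal{Z}_f)$ equals the net exponent of the period-$1$ factors.

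Since $\lambda\bigl((1-t)^{-(n/2+1)}(1+t)^{-n/2}\bigr)=-(n/2+1)+n/2=-1\neq0$, and likewise $\lambda\bigl((1-t)^{-(n+1)}\bigr)=-(n+1)\neq0$, a period-$1$ factor must occur in every representation, so $\text{MPer}_L(f)=\{1\}$ in both orientation-preserving cases; consistently, the same $\lambda$ vanishes on $(1-t^{2})^{-(n+1)/2}$, matching $\text{MPer}_L(f)=\emptyset$ in case (a). The remaining verifications are the routine bookkeeping of Theorem \ref{th_franks} under Remark \ref{remark_una_expr}.
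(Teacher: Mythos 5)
Your computation of $\mathcal{Z}_f(t)$ in the three cases ($d=1$; $d=-1$ with $n$ even; $d=-1$ with $n$ odd) is exactly the paper's, and your reading of the orientation condition via the top-degree action $d^{\,n}$ correctly identifies case (a) with $d=-1$, $n$ odd. Where you genuinely go beyond the paper is in the last step: the paper simply asserts that for $(1-t)^{-(n+1)}$ and $(1-t)^{-(n/2+1)}(1+t)^{-n/2}$ ``any other expression \ldots would provide at least the period $1$,'' whereas you prove it, by introducing the representation-independent invariant $\lambda(\mathcal{Z})=\sum_{e\ge1}\mu(e)\,\operatorname{ord}_{\Phi_e}(\mathcal{Z})$ and checking that $\lambda(1\pm t^{p})=0$ for $p\ge2$ while $\lambda(1\mp t)=\pm1$, so a nonzero value of $\lambda$ forces a period-$1$ factor in every representation. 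Your Möbius computation is correct ($\sum_{e\mid p}\mu(e)=0$ for $p\ge2$ handles $1-t^{p}$, and $1+t^{p}=(1-t^{2p})/(1-t^{p})$ handles the other sign), and the values $\lambda=-(n+1)$, $-1$, and $0$ in the three cases match the claimed conclusions. So your proof is correct and follows the paper's route, but supplies a rigorous cyclotomic argument for the one step the paper leaves at the level of an assertion; this is a worthwhile strengthening, and the same invariant also cleanly re-derives the analogous claims in Theorems \ref{th_sn} and \ref{th_mn_p}.
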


\begin{proof}
We develop the proof for $\CPn$, being the proof for $\HPn$ completely analoguous.
	
We start with the case $d=1$. Computing the Lefschetz zeta function for $f$ using \eqref{zeta_product}, we get
$$
\mathcal{Z}_f(t) = \prod_{k=0}^{2n} \det \, (I_{n_k} - t\, f_{*k})^{(-1)^{k+1}}= \frac{1}{(1-t)^{n+1}},
$$
since we have $f_{*k} = (1)$ for each $k \in \{ 0, \, 2, \, \dots , \, 2n \}$. 
 
We consider next the case $d=-1$. As before, we have $n+1$ nontrivial induced maps, with $f_{*k}= (1)$ if $k/2$ is even and $f_{*k}= (-1)$ if $k/2$ is odd.

For $n$ even computing the Lefschetz zeta function for $f$ using equation \eqref{zeta_product}, we get
$$
\mathcal{Z}_f(t) = \prod_{k=0}^{2n} \det \, (I_{n_k} - t\, f_{*k})^{(-1)^{k+1}}= \frac{1}{(1-t)^{(\frac{n}{2} + 1)}} \frac{1}{(1+t)^{\frac{n}{2}}} = \frac{1}{(1-t^2)^{\frac{n}{2}}} \frac{1}{(1-t)},
$$
and for $n$ odd 
we have
$$
\mathcal{Z}_f(t) = \prod_{k=0}^{2n} \det \, (I_{n_k} - t\, f_{*k})^{(-1)^{k+1}}= \frac{1}{(1-t)^{\frac{n+1}{2}}} \frac{1}{(1+t)^{\frac{n+1}{2}}} = \frac{1}{(1-t^2)^{\frac{n+1}{2}}}.
$$
 
It is clear that $f$ satisfies the hypotheses of Theorem \ref{th_franks}. 
Then the results follow directly applying Theorem \ref{th_franks} to each of the expressions obtained for the Lefschetz zeta function $\mathcal{Z}_f(t)$. For $\mathcal{Z}_f(t) = \frac{1}{(1-t)^{n+1}}$ and $\mathcal{Z}_f(t) = \frac{1}{(1-t^2)^{\frac{n}{2}}} \frac{1}{(1-t)}$ we have $\text{MPer}_L(f) = \{1\}$, since any other expression of the same Lefschetz zeta function as a product of terms the form $(1\pm t^p)^{\pm 1}$ would provide at least the period $1$.

For  $\mathcal{Z}_f(t) =  \frac{1}{(1-t)^{\frac{n+1}{2}}} \frac{1}{(1+t)^{\frac{n+1}{2}}} = \frac{1}{(1-t^2)^{\frac{n+1}{2}}}$ it is clear that $\text{MPer}_L(f) = \emptyset$,  but taking into account the considerations of Remarks \ref{tres_maneres} and \ref{remark_nparell}, one has $\{1, \, 2\} \cap \text{Per}(f) \neq \emptyset$.  
\end{proof}

\begin{remark}
 A map $f$ is called \emph{Lefschetz periodic point free} if $L(f^m) = 0$, for all $m \in \N$. In \cite{graff-justyna} is claimed that there are no Lefschetz periodic point free maps on $\CPn$ and $\HPn$, that is, all self maps of $\CPn$ and $\HPn$ have periodic points. Here we see that in particular Morse--Smale diffeomorphisms in $\CPn$ and $\HPn$ always have fixed points, unless when $n$ is odd and $d=-1$, where in this case there are always fixed points or periodic points of period $2$.
\end{remark}

\section*{Acknowledgements}

This work is supported by the Ministerio de Ciencia, Innovaci\'on y Universidades, Agencia Estatal de Investigaci\'on grant PID2019-104658GB-I00 (FEDER), the Ag\`encia de Gesti\'o d'Ajuts Universitaris i de Recerca grant 2017SGR1617, and the H2020 European Research Council grant MSCA-RISE-2017-777911.

\end{document}